\documentclass[a4paper,11pt]{article}
\usepackage[top=2.5cm,bottom=2.5cm,left=2.2cm,right=2.2cm]{geometry}
\usepackage{amsfonts}
\usepackage{mathrsfs,amscd,amssymb,amsthm,amsmath,bm,graphicx,psfrag,subfigure,url,mathtools}
\usepackage{pict2e}
\usepackage{stfloats}
\usepackage{psfrag,amsmath}
\usepackage{tikz}
\usetikzlibrary{positioning,fit,calc,arrows.meta,backgrounds,shapes.geometric}
\usepackage{indentfirst}
\usepackage{hyperref}
\usepackage{bookmark}
\usepackage{enumerate}
\usepackage{latexsym,euscript,epic,eepic,color}
\usepackage{multirow}
\usepackage{multicol}
\usepackage{longtable}
\usepackage{adjustbox}
\usepackage[all]{xy}
\usepackage{setspace}
\usepackage{booktabs}
\usepackage{epstopdf}
\allowdisplaybreaks
\usepackage{authblk}
\usepackage{cite}

\makeatletter

\renewcommand{\@seccntformat}[1]{{\csname the#1\endcsname}{\normalsize .}\hspace{.5em}}
\makeatother

\usepackage{ifpdf}
\usepackage{indentfirst}

\def \[{\begin{equation}}
	\def \]{\end{equation}}

\newtheorem{thm}{Theorem}[section]

\newtheorem{claim}{Claim}
\newtheorem{lemma}[thm]{Lemma} 
\newtheorem{cor}[thm]{Corollary}
\newtheorem{problem}[thm]{Problem} 
\newtheorem{conj}{Conjecture}

\DeclareMathOperator{\comp}{comp}
\newcommand{\one}{\mathbf 1}

\begin{document}

\title{\bf Nonhamiltonian regular sublinear expanders}
\author{%
  \textbf{Chengli Li}%
  \thanks{School of Mathematical Sciences, South China Normal University,
  Guangzhou 510631, China. Email: \texttt{lichengli0130@126.com}.}
  \qquad
  \textbf{Yurui Tang}%
  \thanks{School of Mathematical Sciences, East China Normal University,
  Shanghai 200241, China. Email: \texttt{tyr2290@163.com}.}
}
\date{}
\maketitle
\vspace{-2.5em}

\begin{abstract}
Letzter, Methuku and Sudakov proved that sufficiently regular sublinear expanders contain nearly spanning cycles and paths. Montgomery subsequently conjectured that every regular sublinear expander of sufficiently large degree is hamiltonian. Recently, Chen, Liu, Wei and Yang disproved this conjecture by constructing $d$-regular sublinear expanders of order $n$ with $d=(1/2+o(1))\log^2 n$ and small circumference. They further posed the problem of determining whether, for every fixed $\varepsilon>0$, there exists a constant $C=C(\varepsilon)$ such that every sufficiently large $n$-vertex $d$-regular $(\varepsilon,d)$-expander with $d\ge C\log^2 n$ is hamiltonian. We answer this problem negatively by constructing, for every sufficiently small fixed $\varepsilon>0$, infinitely many nonhamiltonian $d$-regular $(\varepsilon,d)$-expanders with degrees far above the $\log^2 n$ scale.

\smallskip
\noindent\textbf{Keywords:} Hamilton cycle, sublinear expander, regular graph, Ramanujan graph, toughness

\noindent\textbf{2020 Mathematics Subject Classification:} 05C45, 05C48, 05C50
\end{abstract}

\section{Introduction}

All graphs in this paper are finite, simple and undirected. For a graph $G$ and a set $X\subseteq V(G)$, the \emph{external neighborhood} of $X$ is
$N_G(X)=\{v\in V(G)\setminus X: v\text{ has a neighbor in }X\}$.
A \emph{Hamilton cycle} is a cycle containing every vertex, and a graph is
\emph{hamiltonian} if it has a Hamilton cycle. The \emph{circumference}
$c(G)$ is the maximum order of a cycle in $G$.

Following Chen, Liu, Wei and Yang~\cite{ChenLiuWeiYang}, for $\varepsilon>0$ and
$k>0$, define
$$
\rho(x;\varepsilon,k)=
\begin{cases}
0, & x<k/5,\\[2mm]
\displaystyle\frac{\varepsilon}{\log^2(15x/k)}, & x\ge k/5.
\end{cases}
$$
All logarithms are natural. A graph $G$ is an $(\varepsilon,k)$-expander if $|N_G(X)|\ge \rho(|X|;\varepsilon,k)|X|$ for every set $X\subseteq V(G)$ with $k/2\le |X|\le |V(G)|/2$.

Hamiltonicity is one of the central global properties in graph theory. Classical theorems of Dirac and Ore force a Hamilton cycle through large minimum degree or large degree sums \cite{Dirac,Ore}. The theorem of Chv\'atal and Erd\H{o}s gives another dense-graph condition by comparing connectivity and independence number \cite{ChvatalErdos}. In sparse graphs, expansion often replaces density. This point of view appears in work on pseudorandom graphs and deterministic expanders, including the results of Krivelevich and Sudakov \cite{KrivelevichSudakov}, Hefetz, Krivelevich and Szab\'o \cite{HefetzKrivelevichSzabo}, Dragani\'c, Montgomery, Munh\'a Correia, Pokrovskiy and Sudakov \cite{DraganicEtAl}, and 
Brada\v{c} and Janzer \cite{BradavcJanzer}.

Sublinear expansion is weaker than the usual constant-factor expansion. It was developed by Koml\'os and Szemer\'edi in their work on topological cliques \cite{KomlosSzemerediI,KomlosSzemerediII}; see also the survey of Letzter \cite{LetzterSurvey} and the recent survey of Montgomery \cite{MontgomerySurvey}. Its weakness makes it possible to find sublinear expanders inside broad classes of sparse graphs, while its strength is still sufficient for many embedding problems. Letzter, Methuku and Sudakov proved that sufficiently regular sublinear expanders contain nearly spanning cycles and paths \cite{LetzterMethukuSudakov}. Motivated by this result, Montgomery proposed the following conjecture.
\begin{conj}[Montgomery\cite{MontgomerySurvey}]\label{Montgomery}
For every $\varepsilon>0$, there exists a constant $d_0$ such that, for every $d\ge d_0$, every $d$-regular $(\varepsilon,d)$-expander is hamiltonian.
\end{conj}
Chen, Liu, Wei and Yang \cite{ChenLiuWeiYang} disproved Conjecture~\ref{Montgomery} by constructing regular sublinear expanders with small circumference.
\begin{thm}[Chen--Liu--Wei--Yang \cite{ChenLiuWeiYang}]
Let $0<\eta<1/2$ and $0<\varepsilon_2<1/20$. There exists
$\varepsilon=\varepsilon(\eta,\varepsilon_2)>0$ such that, for every
fixed $0<\varepsilon_1<\varepsilon$ and infinitely many integer pairs $(d,n)$, there is a
$d$-regular $n$-vertex graph $G$ such that
\begin{enumerate}[\rm (1)]
 \item $G$ is an $(\varepsilon_1,\varepsilon_2d)$-expander,
 \item $c(G)<\eta n$, and
 \item $d=(1/2+o_d(1))\log^2 n$.
\end{enumerate}
\end{thm}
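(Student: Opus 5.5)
The plan is to build $G$ from many dense ``blocks'' hanging off a small ``hub'' set $S$. Deleting $S$ then leaves far more than $|S|$ components. Any cycle visits the components of $G-S$ in segments separated by vertices of $S$, so it meets at most $|S|$ components. This gives the toughness-type bound
$$c(G)\le |S|\,(1+s),$$
where $s$ is the maximum block size. The whole difficulty is to keep $|S|$ tiny while still making $G$ a sublinear expander.

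\textbf{Construction.} I would take $m$ blocks $B_1,\dots,B_m$, each of size $s$. Inside each block I would place a near-$d$-regular graph that is itself a strong expander, say a Ramanujan or random $d$-regular graph on $s$ vertices from which a sparse set of edges is removed. The freed degree is then routed to the hub $S$. The routing follows an auxiliary bipartite expander between the blocks and $S$: each block gets about $b$ hub neighbours, and each hub vertex meets about $d$ blocks. Finally I would fix degrees by local switchings so that the graph is exactly $d$-regular, and set $n=ms+|S|$.

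\textbf{Counting constraints.} Degree counting forces $|S|d\gtrsim mb$. The circumference bound requires
$$|S|\,s<\eta n\approx \eta ms,$$
that is, $|S|\ll \eta m$, which holds whenever $b\ll \eta d$. The expansion condition for a single whole block $X=B_i$ is the binding one. It needs
$$b\ \ge\ \frac{\varepsilon_1 s}{\log^2\!\big(15s/(\varepsilon_2 d)\big)}.$$
Together with $b\ll\eta d$, this forces $s/\log^2(s/d)\lesssim d/\varepsilon_1$. Under this constraint blocks cannot be too large.

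\textbf{Where $d=(1/2+o(1))\log^2 n$ should come from.} I expect the $\log^2 n$ scale to arise from sets $X$ that are unions of many whole blocks. For such $X$ the only boundary lies in $S$, with size roughly $|X|b/(sd)$ times the expansion factor of the auxiliary bipartite graph. For $|X|$ near $n/2$ this must beat $\varepsilon_1|X|/\log^2 n$. Balancing this against the single-block constraint, and optimising over $s$ and $b$, should force $\log^2 n\approx 2d$ up to $1+o(1)$ factors. The constant $\varepsilon(\eta,\varepsilon_2)$ absorbs the slack. I would carry out this parameter optimisation first, since it fixes $s$, $b$, $|S|$ and $m$ as functions of $d$.

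\textbf{Verifying expansion.} Take $X$ with $\varepsilon_2 d/2\le|X|\le n/2$. I would split into three cases.
\begin{enumerate}[(a)]
\item $X$ occupies a proper, non-negligible fraction of some block. Here the internal Ramanujan expansion (expander mixing lemma) gives linear-size boundary inside the block.
\item $X$ is, up to small error, a union of whole blocks plus part of $S$. Here the expansion of the auxiliary bipartite graph gives boundary in $S$, or in new blocks if $X$ contains many hub vertices.
\item Mixed sets. I would reduce these to (a) and (b) by a standard averaging over blocks.
\end{enumerate}
The main obstacle is case (b) across all scales of $|X|$ simultaneously. The required ratio $\varepsilon_1/\log^2(15|X|/(\varepsilon_2 d))$ decays only logarithmically, while the available ratio is essentially $b/(sd)$ times the bipartite expansion. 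The bipartite block–hub graph must therefore have expansion tuned to match a $1/\log^2$ profile, not a constant one. My first attempt would be a random biregular bipartite graph, with a union bound over collections of blocks, to certify the needed expansion for every number of blocks.
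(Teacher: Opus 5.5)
The paper does not prove this theorem. It is only quoted from Chen--Liu--Wei--Yang, so there is no in-paper argument to compare with. Its closest relative is the paper's own $G(d,r)$, which uses exactly your obstruction: a hub $R$ with near-cliques $K_{d+1}-M_i$ attached by matchings, so that $\comp(G-R)>|R|$. Your architecture is sensible, but what you have is a plan rather than a proof, and your account of condition (3) is wrong.

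\textbf{Condition (3) is not forced by your constraints.} Write $\rho(x):=\rho(x;\varepsilon_1,\varepsilon_2 d)$. Your constraints are all one-sided. A single block needs $b\ge\rho(s)s$. A union of about $m/2$ blocks needs $|S|\approx mb/d\gtrsim\rho(n/2)\,n/2$, i.e.\ $\log^2\bigl(15n/(2\varepsilon_2 d)\bigr)\gtrsim\varepsilon_1 sd/(2b)$, up to the expansion constant of the auxiliary graph. This is only a lower bound on $n$, and its constant scales with $\varepsilon_1/\eta$. Since $\varepsilon_1$ may be arbitrarily small, no balancing of $s$ and $b$ can produce the absolute constant $1/2$, and nothing in your planned optimisation pins down $m$. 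Instead, treat $m$ as a free parameter and choose it so that $\log n=(1+o(1))\sqrt{2d}$. With $s\approx d$ and $b\approx\eta d/2$, the large-scale inequality then becomes $\varepsilon_1\lesssim\eta$. That condition, together with the single-block one, is what determines $\varepsilon(\eta,\varepsilon_2)$.

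\textbf{The expansion verification is missing, and case (c) cannot be done by averaging.} This verification is the whole content of the theorem. Let $\Gamma(I)\subseteq S$ be the set of hubs adjacent to the blocks $B_i$, $i\in I$.
\begin{itemize}
\item For $X=\bigcup_{i\in I}B_i$ you need $|\Gamma(I)|\ge\rho(s|I|)\,s|I|$. With $s\approx d$ this is an expansion ratio of order $\varepsilon_1 d/\log^2|I|$, which exceeds $1$ for all $|I|$ up to roughly $e^{\sqrt{\varepsilon_1 d}}$.
\item For $X=\bigcup_{i\in I}B_i\cup\Gamma(I)$, every neighbour of $X$ is a port of a block outside $I$. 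Hence $|N_G(X)|\le d|\Gamma(I)|-b|I|$, and you need this to be at least $\rho(|X|)|X|$. That is a strict excess over the trivial count $|\Gamma(I)|\ge b|I|/d$, at every scale up to $|I|\approx m/2$.
\end{itemize}
These block-side and hub-side expansion properties have to be built into the auxiliary graph and then proved. A random biregular graph with a union bound may supply them, but you have not carried this out.

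\textbf{A more direct route, generalising $G(d,r)$.} Take $B$ from Lemma~\ref{Ramanujan} with $t\log 2\approx\sqrt{2d}$. Replace every vertex of $L$ by $q>1/\eta$ copies of $K_{d+1}$ minus a matching, each copy having $d/q$ ports (assume $2q\mid d$), and keep $R$ as the hub. Then:
\begin{itemize}
\item The graph is exactly $d$-regular, so no switchings are needed.
\item $G-R$ has $q|L|$ components, so $c(G)\le|R|(d+2)<\eta n$ for large $d$.
\item The blocks-only case follows from Tanner's bound $|N_B(L')|\ge d^2|L'|/\bigl(\lambda^2+(d^2-\lambda^2)|L'|/|L|\bigr)$ with $\lambda=2\sqrt{d-1}$, once $\varepsilon_1\lesssim 1/q$.
\item The case with hubs follows from Lemma~\ref{cut} applied to $U=L'\cup N_B(L')$, since $e_B(U,V(B)\setminus U)=d\bigl(|N_B(L')|-|L'|\bigr)$ and this is exactly $|N_G(X)|$.
\end{itemize}
Sets that meet blocks only partially still need a separate argument using the clique structure. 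At least the two critical families then reduce to spectral facts about $B$, rather than to an unproved union bound.
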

Chen, Liu, Wei and Yang constructed regular sublinear expanders of degree $(1/2+o(1))\log^2 n$ and they further posed the problem of determining whether a degree threshold of order $\log^2 n$ is already sufficient to force hamiltonicity.

\begin{problem}[{Chen--Liu--Wei--Yang \cite[Problem~5.1]{ChenLiuWeiYang}}]\label{prob}
For every $\varepsilon>0$, does there exist a constant $C=C(\varepsilon)>0$ such that every sufficiently large $n$-vertex $d$-regular $(\varepsilon,d)$-expander with $d\ge C\log^2 n$ contains a Hamilton cycle?
\end{problem}

Our answer is negative. The construction theorem below is stronger than the expansion demanded in Problem~\ref{prob} because it gives constant-factor vertex expansion for every nonempty set of size at most half the order.

\begin{thm}\label{main}
There is an absolute integer $d_0$ such that, for every integer $d\ge d_0$ is
divisible by $4$ and every integer $r\ge1$, there is a simple nonhamiltonian $d$-regular
graph $G=G(d,r)$ of order $2^{r+1}d+2d+1$ such that every nonempty set
$X\subseteq V(G)$ with $|X|\le |V(G)|/2$ satisfies
$|N_G(X)|\ge |X|/72$.
\end{thm}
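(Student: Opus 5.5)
The plan is to defeat hamiltonicity through toughness while building expansion in from the start. Recall that if deleting some set $S$ leaves more than $|S|$ components, then $G$ has no Hamilton cycle. So I will construct $G$ from three ingredients:
\begin{itemize}
\item a separator $S$ of size $s$;
\item $s+1$ ``blobs'' $B_1,\dots,B_{s+1}$, each a near-clique on about $d$ vertices (for instance $K_{d+1}$ or $K_{d+2}$ with a matching or a sparse regular graph removed), with no edges between different blobs;
\item a sparse bipartite ``connector'' between $S$ and the blobs, which restores degree $d$ and supplies the global expansion.
\end{itemize}
The order $2^{r+1}d+2d+1$ and the divisibility $4\mid d$ suggest the following parameters. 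Take $s+1=2^{r}+1$ or $2^{r+1}+1$ blobs, so that $2^{r}$ is the number of vertices of a hypercube-like or Ramanujan-type bipartite base graph. Use one exceptional gadget of size $2d+1$, which makes the total odd and absorbs the parity and degree defects. Divisibility by $4$ lets the removed matchings and the connector edges split evenly.

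\textbf{Step 1 (the connector).} Each blob vertex loses a bounded number of internal edges, about one or two. These freed half-edges are routed to $S$ so that every blob sends edges to at least $cd$ distinct vertices of $S$, and every vertex of $S$ receives exactly its $d$ edges, possibly together with a few internal edges inside $S$. The edge count must balance: blobs contribute roughly $(s+1)\cdot\Theta(d)$ edges and $S$ absorbs $ds$. The small $+1$ surplus is fixed by the $2d+1$ gadget, which is attached to $S$ slightly more heavily. I would realise the blob--$S$ incidence pattern as a blow-up of a bipartite expander on $\Theta(2^r)$ vertices, taking a bipartite Ramanujan graph or the hypercube double cover as given. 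I would then expand each vertex into $d$ copies in a balanced way, so that every vertex of $S$ sees $\Theta(1)$ edges into each of $\Theta(d)$ blobs.

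\textbf{Step 2 (nonhamiltonicity).} This is immediate: $G-S$ has at least $s+1>|S|$ components, namely the blobs together with the gadget.

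\textbf{Step 3 (expansion), which I expect to be the main obstacle.} Fix a set $X$ with $|X|\le |V(G)|/2$. Write $X_S=X\cap S$, and let $X_i=X\cap B_i$. I would split the analysis by how $X$ meets the blobs.
\begin{itemize}
\item \emph{Partial blobs.} If $X_i$ is a nonempty proper part of $B_i$ with $|X_i|\le |B_i|/2$, then the near-clique structure gives $|N(X_i)\cap B_i|\ge |X_i|-O(1)$. If $|X_i|>|B_i|/2$, then $B_i\setminus X_i$ lies entirely in $N(X)$.
\item \emph{Full blobs.} These can only expand through $S$. Here the connector matters: a collection of $m$ full blobs has at least $m\cdot cd$ edges into $S$, and the bipartite expander property (spectral gap via the expander mixing lemma) forces these edges to hit at least $\min\{\Theta(md),\,\Theta(s)\}$ vertices of $S$.
\item \emph{Sets dominated by $X_S$.} When $X_S$ is large, its neighbours spread over many blobs, and every blob it touches either lies in $X$, and is then counted above, or contributes new neighbours.
\end{itemize}
The delicate regime is when $X$ consists of nearly half of all blobs together with essentially all of $S$. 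Then $N(X)$ must be found in the remaining blobs, and the count must use $|X|\le |V(G)|/2$ to guarantee that at least $\Theta(s)$ blobs are untouched by $X$ yet adjacent to $X_S$. I would combine these cases with explicit constants, accepting losses by factors of $2$ and $3$ in each, to reach the stated bound $|X|/72$. Finally, I would check that $d\ge d_0$ absorbs all the $O(1)$ error terms.
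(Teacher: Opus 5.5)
\textbf{There is a genuine gap, and it lies in the construction itself, not in the case analysis of Step~3.} No graph in which $G-S$ consists of $s+1$ near-clique blobs of size about $d$ can have vertex expansion $1/72$ once $d$ is large.

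Let $n=|V(G)|$. Let $X$ be a union of blobs (possibly with the gadget) chosen so that $n/2-O(d)<|X|\le n/2$. There are no edges between blobs, so $N_G(X)\subseteq S$. On the other hand, $n\ge (s+1)d$ gives $|S|=s<n/d$. Hence $|N_G(X)|/|X|=O(1/d)$.

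Your own bound $\min\{\Theta(md),\Theta(s)\}$ for $m$ full blobs already shows this: it saturates at $\Theta(s)$, while $|X|=\Theta(md)$ grows to $\Theta(sd)$.

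More generally, suppose $G-S$ has more than $|S|$ components and every $X$ with $|X|\le n/2$ has $|N_G(X)|\ge |X|/C$. The same half-of-the-components test gives $|S|\gtrsim n/(2C+1)$. So the components have average size $O(C)$, and almost all of them must be tiny.

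Two further problems, separately:
\begin{itemize}
\item The edge count you flag does not balance as described. If every vertex of a $(d+1)$-vertex blob gives up an internal edge, the blobs send at least $(s+1)(d+1)>ds$ edges into $S$.
\item Your parameter choices do not produce the order $2^{r+1}d+2d+1$.
\end{itemize}

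\textbf{The missing idea.} Let the separator be a constant fraction of the graph and the components be mostly singletons, so that only \emph{one} surplus component has to be manufactured. The paper does this as follows.
\begin{itemize}
\item Take $S=R$, one side of a $d$-regular bipartite Ramanujan graph $B$ with $|L|=|R|=m=d2^r$.
\item Delete a single vertex $u\in L$ and replace it by two copies of $K_{d+1}$ minus a matching of size $d/4$. Each copy has exactly $d/2$ ports of degree $d-1$, which are matched to the two halves of $N_B(u)$.
\item Then $G-R$ has the $m-1$ singletons of $L\setminus\{u\}$ plus the two cliques, i.e.\ $m+1>|R|$ components.
\item The degrees come out exactly, and so does the order $(2m-1)+2(d+1)$.
\item Expansion is inherited from $B$ through the spectral cut bound $|N_B(U)|\ge\max\{|U|/4,\,d/8\}$ for $|U|\le m$. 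This is applied either to $X\cap V(B)$ or to the vertices of $B-u$ lying outside $X\cup N_G(X)$. The two cliques are handled directly.
\end{itemize}

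Your framework contains this construction as a special case, but only if every blob except two is a single vertex.
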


Let $\varepsilon_0=\log^2(15/2)/72$. Theorem~\ref{main} immediately yields the following consequence.

\begin{cor}\label{answer}
For every fixed $\varepsilon$ with $0<\varepsilon\le\varepsilon_0$,
Problem~\ref{prob} has a negative answer. More precisely, there are
infinitely many nonhamiltonian $n$-vertex $d$-regular
$(\varepsilon,d)$-expanders for which $d=o(n)$ and
$d/\log^2 n\to\infty$.
\end{cor}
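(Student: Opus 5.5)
The corollary should follow from Theorem~\ref{main} by two checks. First, the constant-factor vertex expansion of $G(d,r)$ dominates the sublinear threshold $\rho$ once $|X|\ge d/2$. Second, the two free parameters $d$ and $r$ can be coupled so that $d=o(n)$ while $d/\log^2 n\to\infty$.

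\emph{Expansion.} Fix $0<\varepsilon\le\varepsilon_0$. Take $d\ge d_0$ divisible by $4$, take $r\ge1$, and let $G=G(d,r)$ be the graph from Theorem~\ref{main}, with $n=2^{r+1}d+2d+1$. Let $X\subseteq V(G)$ satisfy $d/2\le |X|\le n/2$. Since $|X|\ge d/2>d/5$, we are in the second case of the definition of $\rho$. Moreover $15|X|/d\ge 15/2>1$ and $\log^2$ is increasing on $[1,\infty)$, so
$$
\rho(|X|;\varepsilon,d)=\frac{\varepsilon}{\log^2(15|X|/d)}\le\frac{\varepsilon}{\log^2(15/2)}\le\frac{\varepsilon_0}{\log^2(15/2)}=\frac1{72}.
$$
Theorem~\ref{main} gives $|N_G(X)|\ge |X|/72\ge \rho(|X|;\varepsilon,d)|X|$. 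Hence $G$ is an $(\varepsilon,d)$-expander. It is also $d$-regular and nonhamiltonian by Theorem~\ref{main}.

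\emph{Choice of parameters.} Let $d$ run through the multiples of $4$ that are at least $d_0$, and put $r=r(d)=\lceil \log d\rceil$. Then $n/d\ge 2^{r+1}\to\infty$, so $d=o(n)$, and $n\to\infty$.

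For the degree, use $n\le 2^{r+2}d$ to get $\log n\le (r+2)\log 2+\log d=O(\log d)$. Therefore $d/\log^2 n=\Omega(d/\log^2 d)\to\infty$. Any choice with $r\to\infty$ and $r=o(\sqrt d)$ would also work.

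Distinct $d$ give distinct graphs, so we obtain infinitely many examples.

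\emph{Negative answer to Problem~\ref{prob}.} Suppose some $C=C(\varepsilon)$ and threshold $n_1$ had the property asked for in Problem~\ref{prob}. Since $d/\log^2 n\to\infty$ and $n\to\infty$ along our family, some member satisfies $n\ge n_1$ and $d\ge C\log^2 n$. That member is a nonhamiltonian $d$-regular $(\varepsilon,d)$-expander, which is a contradiction.

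The only step requiring care is the monotonicity bound on $\rho$, namely that $|X|\ge d/2$ forces $15|X|/d\ge 15/2$. This is exactly why $\varepsilon_0$ is defined as $\log^2(15/2)/72$.
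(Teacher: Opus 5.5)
Your proposal is correct and follows the paper's proof: the same monotonicity bound $\rho(|X|;\varepsilon,d)\le\varepsilon/\log^2(15/2)\le 1/72$ for $d/2\le|X|\le n/2$, then a coupling of $r$ to $d$ with $r\to\infty$ slowly. The differences are cosmetic: you take $r=\lceil\log d\rceil$ (so $d/\log^2 n=\Omega(d/\log^2 d)$) where the paper takes $r=\lfloor d^{1/4}\rfloor$ (so $d/\log^2 n\sim d^{1/2}/\log^2 2$), and you spell out explicitly the contradiction with a putative constant $C(\varepsilon)$.
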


The remainder of the paper is organized as follows.
Section~\ref{pre} collects the spectral and toughness facts used later.
Section~\ref{proof} gives the construction and proves
Theorem~\ref{main} and Corollary~\ref{answer}.

\section{Preliminaries}\label{pre}

For a graph $F$ of order $N$, let $A_F$ denote its adjacency matrix, and
write its adjacency eigenvalues in nonincreasing order as
$$
\lambda_1(F)\ge \lambda_2(F)\ge\cdots\ge\lambda_N(F).
$$
If $F$ is connected and $d$-regular, then $\lambda_1(F)=d$ and this
eigenvalue is simple. If, in addition, $F$ is bipartite, then its spectrum is
symmetric about zero and $\lambda_N(F)=-d$. The two eigenvalues $d$ and $-d$
are called the \emph{trivial eigenvalues}. A connected $d$-regular bipartite
graph $F$ is \emph{Ramanujan} if
$$
\max_{2\le j\le N-1}|\lambda_j(F)|\le 2\sqrt{d-1}.
$$
For disjoint sets $U,W\subseteq V(F)$, we denote by $e_F(U,W)$ the number of
edges of $F$ with one endpoint in $U$ and the other in $W$.

We obtain the base graph required below directly from
\cite[Lemma~2.1]{ChenLiuWeiYang}. Taking $c=d$ and $t=r$ in that lemma gives
a $d$-regular bipartite Ramanujan graph with both bipartition classes of size
$d2^r$. The construction there yields a simple graph. It is also connected,
since otherwise $d$ would be a nontrivial adjacency eigenvalue, contradicting
$2\sqrt{d-1}<d$.

\begin{lemma}\label{Ramanujan}
For every pair of integers $d\ge3$ and $r\ge0$, there is a connected simple
$d$-regular bipartite Ramanujan graph $B$ with bipartition $(L,R)$ such that
$|L|=|R|=d\,2^r$.
\end{lemma}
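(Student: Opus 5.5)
The plan is to build $B$ by repeated $2$-lifts of the complete bipartite graph $K_{d,d}$, using the interlacing-families theorem of Marcus, Spielman and Srivastava. This is essentially the content of \cite[Lemma~2.1]{ChenLiuWeiYang} with $c=d$ and $t=r$, so one option is simply to cite that lemma. Below I recall the argument so that simplicity and connectivity can be checked directly.

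\emph{Base case.} Set $B_0=K_{d,d}$. It is simple, $d$-regular and bipartite with both classes of size $d=d\,2^0$. Its spectrum is $d$ and $-d$ (each with multiplicity one) and $0$ (with multiplicity $2d-2$). Hence all nontrivial eigenvalues are $0\le 2\sqrt{d-1}$, and $B_0$ is Ramanujan.

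\emph{Inductive step.} Suppose $B_i$ is a simple $d$-regular bipartite Ramanujan graph with classes $(L_i,R_i)$ of size $d\,2^i$.

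\begin{enumerate}[(a)]
\item Recall that a $2$-lift given by a signing $s:E(B_i)\to\{\pm1\}$ has spectrum equal to the multiset union of the spectrum of $A_{B_i}$ and that of the signed adjacency matrix $A_s$.
\item By Marcus--Spielman--Srivastava, every $d$-regular graph has a signing with $\lambda_{\max}(A_s)\le 2\sqrt{d-1}$. For bipartite graphs the spectrum of $A_s$ is symmetric about $0$, so this gives $\|A_s\|\le 2\sqrt{d-1}$.
\item Let $B_{i+1}$ be the corresponding lift. It is $d$-regular, and it is bipartite with classes the preimages of $L_i$ and $R_i$, each of size $d\,2^{i+1}$.
\item $B_{i+1}$ is simple, because each edge $uv$ of $B_i$ lifts to two edges joining distinct fibres and the two copies of $u$ are never adjacent.
\item Its nontrivial eigenvalues are the nontrivial eigenvalues of $B_i$ together with the eigenvalues of $A_s$. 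All of these have absolute value at most $2\sqrt{d-1}$, except for the eigenvalues $\pm d$ inherited from $B_i$.
\end{enumerate}

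After $r$ steps, $B=B_r$ has the required order, regularity, bipartiteness and simplicity.

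\emph{Connectivity.} This comes from the spectral argument. In a $d$-regular graph the multiplicity of the eigenvalue $d$ equals the number of components. If $B$ were disconnected, $d$ would appear as some $\lambda_j$ with $2\le j\le N-1$. This contradicts $2\sqrt{d-1}<d$, which holds for $d\ge3$. The same argument shows that the trivial eigenvalues $\pm d$ are each simple, as the definition of Ramanujan requires.

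The main obstacle is step (b), the existence of a good signing. I would not reprove it but invoke the Marcus--Spielman--Srivastava theorem as used in \cite[Lemma~2.1]{ChenLiuWeiYang}. Everything else is routine bookkeeping for lifts.
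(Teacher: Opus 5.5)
Your proposal is correct and takes essentially the paper's route. The paper invokes Lemma~2.1 of Chen--Liu--Wei--Yang with $c=d$ and $t=r$, takes simplicity from that construction, and derives connectivity from $2\sqrt{d-1}<d$ exactly as you do; your sketch of the repeated Marcus--Spielman--Srivastava $2$-lifts of $K_{d,d}$ just makes explicit the construction behind the cited lemma, including why simplicity and the simplicity of the eigenvalues $\pm d$ persist at each step.
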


We shall use the elementary hamiltonian obstruction arising from toughness.
For $S\subseteq V(F)$, let $F-S:=F[V(F)\setminus S]$, and let $\comp(F-S)$ denote
the number of components of $F-S$. A graph $F$ is \emph{$1$-tough} if
$\comp(F-S)\le |S|$ for every vertex cut $S$. It is well known that every
hamiltonian graph is $1$-tough.

The only spectral calculation needed later is the following standard cut
estimate; see, for example, \cite{Chung,HooryLinialWigderson}.

\begin{lemma}\label{cut}
Let $F$ be a connected $d$-regular graph of order $N$. For every set
$U\subseteq V(F)$,
$$
e_F(U,V(F)\setminus U)\ge
\bigl(d-\lambda_2(F)\bigr)|U|\left(1-\frac{|U|}{N}\right).
$$
\end{lemma}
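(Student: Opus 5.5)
The plan is the standard Rayleigh-quotient argument for the Laplacian $L_F=dI-A_F$, applied to the centred indicator vector of $U$. Write $\one$ for the all-ones vector in $\mathbb{R}^{V(F)}$ and $\one_U$ for the indicator vector of $U$. Since $F$ is $d$-regular, $A_F\one=d\one$. Because $A_F$ is real symmetric, we can choose an orthonormal eigenbasis $v_1=\one/\sqrt N, v_2,\dots,v_N$ with $A_Fv_j=\lambda_j(F)v_j$. Then $L_F$ has the same eigenvectors, with eigenvalues $0\le d-\lambda_2(F)\le\cdots\le d-\lambda_N(F)$.

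The key steps are as follows.

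\emph{Step 1: the test vector.} Set $x=\one_U-\frac{|U|}{N}\one$. Then $\langle x,\one\rangle=|U|-|U|=0$, so $x$ lies in the span of $v_2,\dots,v_N$. Expanding $x=\sum_{j\ge2}c_jv_j$ gives
$$x^{\top}L_Fx=\sum_{j\ge2}(d-\lambda_j(F))c_j^2\ge (d-\lambda_2(F))\|x\|^2.$$

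\emph{Step 2: the left-hand side counts the cut.} Use the identity $x^{\top}L_Fx=\sum_{uv\in E(F)}(x_u-x_v)^2$, which holds for $d$-regular graphs. Adding a multiple of $\one$ does not change any difference $x_u-x_v$. So each edge contributes $1$ if it crosses between $U$ and $V(F)\setminus U$, and $0$ otherwise. Hence $x^{\top}L_Fx=e_F(U,V(F)\setminus U)$.

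\emph{Step 3: the norm.} Compute directly:
$$\|x\|^2=|U|\Bigl(1-\frac{|U|}{N}\Bigr)^2+(N-|U|)\frac{|U|^2}{N^2}=\frac{|U|(N-|U|)}{N}=|U|\Bigl(1-\frac{|U|}{N}\Bigr).$$
Combining Steps 1–3 gives the claimed inequality.

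There is no real obstacle here; the only point needing care is Step 1. It uses only that $\one$ is an eigenvector for $\lambda_1(F)=d$ and that $\lambda_2(F)$ is the largest eigenvalue on $\one^{\perp}$, which holds by the orthonormal-eigenbasis choice. Connectedness is not actually needed for the inequality; it only ensures $d-\lambda_2(F)>0$, so that the bound is nontrivial. The degenerate cases $U=\emptyset$ and $U=V(F)$ give $0\ge0$.
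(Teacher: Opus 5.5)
Your proof is correct and follows the paper's argument essentially verbatim: the paper also writes $\one_U=(|U|/N)\one+z$ with $z\perp\one$, bounds $z^{\mathsf T}L_Fz\ge(d-\lambda_2(F))\|z\|^2$, and uses $\|z\|^2=|U|(1-|U|/N)$. The only cosmetic difference is how the cut is identified with the quadratic form: the paper writes it as $\one_U^{\mathsf T}L_F\one_U=z^{\mathsf T}L_Fz$ using $L_F\one=0$, whereas you evaluate $x^{\mathsf T}L_Fx$ directly through the edge-sum identity.
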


\begin{proof}
Let $L_F=dI-A_F$ be the Laplacian matrix of $F$. Let $s=|U|$ and decompose
the characteristic vector of $U$ as $\one_U=(s/N)\one+z$, where
$z\perp\one$. Since $L_F\one=0$ and every eigenvalue of $L_F$ on
$\one^\perp$ is at least $d-\lambda_2(F)$, we have
$$e_F(U,V(F)\setminus U)
  =\one_U^{\mathsf T}L_F\one_U
  =z^{\mathsf T}L_Fz
  \ge \bigl(d-\lambda_2(F)\bigr)\|z\|^2.$$
Because $\|z\|^2=s(1-s/N)$, the desired inequality follows.
\end{proof}

We now turn the spectral gap into the two neighborhood estimates used in the
construction.

\begin{lemma}\label{expansion}
Let $d\ge64$, and let $B$ be a $d$-regular bipartite Ramanujan graph of order
$2m$. If $\emptyset\ne U\subseteq V(B)$ and $|U|\le m$, then
$$
|N_B(U)|\ge \frac{|U|}{4}
\qquad\text{and}\qquad
|N_B(U)|\ge \frac d8.
$$
\end{lemma}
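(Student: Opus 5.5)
The plan is to get both bounds from the spectral cut estimate of Lemma~\ref{cut}, plus a trivial degree count for small sets. By definition a Ramanujan graph is connected and $d$-regular, so Lemma~\ref{cut} applies to $B$ with $N=2m$. Since $\lambda_2(B)$ is a nontrivial eigenvalue, we have $\lambda_2(B)\le 2\sqrt{d-1}<2\sqrt d$. For $d\ge 64$ we have $\sqrt d\ge 8$, hence $2\sqrt d\le d/4$. This gives the spectral gap
$$d-\lambda_2(B)\ge \tfrac{3d}{4}.$$

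\emph{First bound.} Let $U$ be nonempty with $|U|\le m$. Then $1-|U|/(2m)\ge 1/2$, and Lemma~\ref{cut} gives
$$e_B\bigl(U,V(B)\setminus U\bigr)\ge \tfrac{3d}{4}\cdot\tfrac12\,|U|=\tfrac{3d}{8}\,|U|.$$
Every edge leaving $U$ ends in $N_B(U)$. Each vertex of $N_B(U)$ is the endpoint of at most $d$ such edges, because $B$ is $d$-regular. Therefore
$$|N_B(U)|\ge \tfrac1d\, e_B\bigl(U,V(B)\setminus U\bigr)\ge \tfrac{3}{8}|U|\ge \tfrac{|U|}{4}.$$

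\emph{Second bound.} We split according to the size of $U$.
\begin{itemize}
\item If $|U|\ge d/2$, the estimate just proved gives $|N_B(U)|\ge \tfrac38|U|\ge \tfrac{3d}{16}\ge d/8$.
\item If $|U|<d/2$, fix any $u\in U$. At most $|U|-1<d/2$ of its $d$ neighbours lie in $U$. So more than $d/2$ of them lie in $N_B(U)$, and $|N_B(U)|>d/2\ge d/8$.
\end{itemize}

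The only step needing any care is the numerical check that the Ramanujan bound leaves a gap of at least $3d/4$ once $d\ge64$; everything else is immediate. The argument does not use bipartiteness beyond the guarantee that $\lambda_2(B)$ is a nontrivial eigenvalue and so obeys the Ramanujan bound.
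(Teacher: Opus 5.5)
Your proof is correct and follows the paper's argument essentially step for step. The Ramanujan bound gives $d-\lambda_2(B)\ge 3d/4$ for $d\ge 64$; Lemma~\ref{cut} together with $1-|U|/(2m)\ge 1/2$ and the degree-$d$ count gives $|N_B(U)|\ge 3|U|/8$; and the $d/8$ bound splits at $|U|=d/2$, using the same single-vertex argument for small $U$.
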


\begin{proof}
Put $s=|U|$. Since $B$ is Ramanujan and $d\ge64$, its second largest
adjacency eigenvalue satisfies
$\lambda_2(B)\le2\sqrt{d-1}\le d/4$. The inequality $s\le m$ gives
$1-s/(2m)\ge1/2$. Lemma~\ref{cut} therefore yields
$$
e_B(U,V(B)\setminus U)
\ge \frac{3d}{4}s\left(1-\frac{s}{2m}\right)
\ge\frac{3ds}{8}.
$$
Every vertex in $N_B(U)$ is incident with at most $d$ edges of this cut.
Hence $|N_B(U)|\ge3s/8\ge s/4$.

It remains to obtain a lower bound independent of $s$. When $s\ge d/2$, the
preceding estimate gives $|N_B(U)|\ge s/4\ge d/8$. When $s<d/2$, choose
$v\in U$. At most $s-1$ neighbors of $v$ lie in $U$, so more than $d/2$
neighbors of $v$ lie outside $U$. All of them belong to $N_B(U)$, and hence
$|N_B(U)|\ge d/8$.
\end{proof}

\section{Proofs of the main results}\label{proof}

\subsection{The construction and its basic properties}\label{construction}

Fix an integer $d\ge72$ divisible by $4$ and an integer $r\ge1$, and put
$m=d\,2^r$. Let $B=(L,R;E(B))$
be a connected simple $d$-regular bipartite Ramanujan graph with
$|L|=|R|=m$, whose existence follows from
Lemma~\ref{Ramanujan}.

Choose a vertex $u\in L$ and partition its neighborhood into two sets of
equal size:
$$
N_B(u)=R_1\cup R_2,
\qquad |R_1|=|R_2|=\frac d2.
$$
For each $i\in\{1,2\}$, take a copy $K_i$ of $K_{d+1}$, with
$V(K_1),V(K_2)$ and $V(B)$ pairwise disjoint. Choose a matching
$M_i\subseteq E(K_i)$ of size $d/4$ and let
$
H_i:=K_i-M_i.
$
Let $J_i\subseteq V(H_i)$ be the set of endpoints of the edges in $M_i$.
Thus $|J_i|=d/2$. We call the vertices in $J_i$ the \emph{ports} of $H_i$,
and fix a bijection $\varphi_i:J_i\to R_i$.

Define $G=G(d,r)$ by
$$
V(G)
=\bigl(V(B)\setminus\{u\}\bigr)
 \cup V(H_1)\cup V(H_2)
$$
and
$$
\begin{aligned}
E(G)
=E(B-u)\cup E(H_1)\cup E(H_2)
 \cup
 \bigl\{p\varphi_i(p):
 i\in\{1,2\},\ p\in J_i\bigr\}.
\end{aligned}
$$
In other words, $u$ is replaced by the two graphs $H_1,H_2$, and each
$R_i$ is joined by a matching to the ports of $H_i$.

Figure~\ref{fig} illustrates this construction. For each
$i\in\{1,2\}$, the line joining $R_i$ to $H_i$ represents the matching
induced by $\varphi_i$ between $R_i$ and the port set $J_i$.

\begin{figure}[ht]
\centering
\begin{tikzpicture}[
  font=\small,
  box/.style={draw,rounded corners,minimum width=25mm,minimum height=16mm,align=center},
  bigbox/.style={draw,ellipse,minimum width=28mm,minimum height=46mm,align=center},
  matching/.style={thick},
  every node/.style={inner sep=3pt}
]
\node[bigbox] (L) {$L\setminus\{u\}$};
\node[bigbox,right=32mm of L] (R) {$R$\\separator};
\node[box,right=35mm of R,yshift=15mm] (H1) {$H_1=K_{d+1}-M_1$};
\node[box,right=35mm of R,yshift=-15mm] (H2) {$H_2=K_{d+1}-M_2$};

\draw[thick] (L.east) -- node[above,sloped] {edges of $B-u$} (R.west);
\draw[matching] ($(R.east)+(0,10mm)$) -- node[above,sloped] {$\varphi_1$} (H1.west);
\draw[matching] ($(R.east)+(0,-10mm)$) -- node[below,sloped] {$\varphi_2$} (H2.west);
\end{tikzpicture}
\caption{A schematic illustration of the graph $G(d,r)$.}
\label{fig}
\end{figure}

\begin{lemma}\label{nonham}
For every integer $d\ge72$ divisible by $4$ and every integer $r\ge1$, the
graph $G=G(d,r)$ is $d$-regular and nonhamiltonian with
$|V(G)|=2m+2d+1=2^{r+1}d+2d+1$.
\end{lemma}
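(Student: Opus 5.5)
The plan has three parts: count the vertices, check regularity vertex by vertex, and then rule out a Hamilton cycle by a toughness argument with a carefully chosen cut.

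\emph{Order.} Since $B-u$ has $2m-1$ vertices and each $H_i$ has $d+1$ vertices, we get $|V(G)|=2m-1+2(d+1)=2m+2d+1=2^{r+1}d+2d+1$.

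\emph{Regularity.} We check degrees by vertex type.
\begin{enumerate}[(i)]
\item Vertices of $L\setminus\{u\}$ keep all $d$ of their $B$-neighbours, because $B$ is bipartite and so none of them is adjacent to $u$.
\item Vertices of $R\setminus N_B(u)$ are likewise unaffected.
\item A vertex of $R_i$ loses exactly its edge to $u$, since $B$ is simple. It gains exactly one edge, namely the matching edge to $\varphi_i^{-1}$ of it, and $\varphi_i$ is a bijection.
\item In $H_i$, a non-port has degree $d$ in $K_{d+1}$. A port has degree $d-1$ in $H_i$, because $M_i$ is a matching, and gains exactly one edge to $R_i$.
\end{enumerate}
The edges added between $J_i$ and $R_i$ join disjoint vertex sets, so $G$ stays simple. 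Hence $G$ is $d$-regular. The divisibility $4\mid d$ is used only so that $d/2$ and $d/4$ are integers, which makes the partition of $N_B(u)$ and the matching $M_i$ possible; a matching of size $d/4$ exists in $K_{d+1}$ because $d/2\le d+1$.

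\emph{Nonhamiltonicity.} We show that $G$ is not $1$-tough, which suffices since every hamiltonian graph is $1$-tough. The cut to use is $S=R$, as the figure suggests. In $G-R$, the only edges leaving $V(H_1)\cup V(H_2)$ go to $R_1\cup R_2\subseteq R$. The vertices of $L\setminus\{u\}$ have all their neighbours in $R$, and so they become isolated. Each $H_i$ is connected, being $K_{d+1}$ minus a matching, and the two are not joined to each other. Therefore
$$\comp(G-R)=|L|-1+2=m+1>m=|S|.$$
This contradicts $1$-toughness, and $G$ is nonhamiltonian.

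There is no real obstacle here; the only care needed is the bookkeeping showing that no edge of $G$ avoids $R$ except those inside the $H_i$, together with the observation that $u\notin R$. An alternative that does not rely on toughness is to count directly: a Hamilton cycle must alternate between $R$ and the other components, which would require $m+1$ "visits" separated by only $m$ vertices of $R$. I would present the toughness version.
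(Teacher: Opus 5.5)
Your proposal is correct and follows the paper's proof essentially step for step. The paper also counts the order as $(2m-1)+2(d+1)$ and checks degrees by vertex type. It then shows $G$ is not $1$-tough using the cut $S=R$: the $m-1$ vertices of $L\setminus\{u\}$ become isolated and $H_1,H_2$ are two further components, so $\comp(G-R)=m+1>m$.
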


\begin{proof}
Every vertex of $L\setminus\{u\}$ retains its $d$ incident edges. A vertex in
$R_i$ loses its edge to $u$ and receives exactly one new edge from its unique
preimage under $\varphi_i$, while every vertex in
$R\setminus N_B(u)$ is unchanged.

Inside $K_i$, every vertex initially has degree $d$. A non-port vertex is not
incident with $M_i$ and therefore has degree $d$ in $H_i$. Each port is an
endpoint of exactly one edge of $M_i$, so it has degree $d-1$ in $H_i$ and
one additional neighbor in $R_i$. Hence every vertex of $G$ has degree $d$.
The graph is simple because the three vertex sets in the construction are
disjoint, the constituent graphs are
simple, and each $\varphi_i$ is a bijection. The order is
$$
|V(G)|=(2m-1)+2(d+1)=2m+2d+1.
$$

After deleting $R$, all $m-1$ vertices of $L\setminus\{u\}$ are isolated.
The graphs $H_1$ and $H_2$ form two further components (each is connected,
since it is obtained from a complete graph by deleting a matching). Therefore
$$
\comp(G-R)=(m-1)+2=m+1>|R|=m.
$$
Thus $G$ is not $1$-tough and therefore cannot be hamiltonian.
\end{proof}

\subsection{Proof of Theorem~\ref{main}}

Set $d_0=72$, and let $d\ge d_0$ be divisible by $4$ and $r\ge1$.
Let $G=G(d,r)$ be the graph constructed in Section~\ref{construction}.
Lemma~\ref{nonham} shows that $G$ is simple and
$d$-regular, has the order asserted in the theorem, and is not hamiltonian.
It remains to prove the vertex-expansion assertion.

Fix a nonempty set $X\subseteq V(G)$, and let
$
x:=|X|\le|V(G)|/2.$
Set
$$
T:=N_G(X),\qquad b:=|T|,\qquad
Z:=V(G)\setminus(X\cup T).
$$
By the definition of the external neighborhood, there is no edge between
$X$ and $Z$.

We separate the vertices inherited from the base graph from those in the two
new blocks by putting
$$
Q:=V(B)\setminus\{u\},
\qquad W:=V(H_1)\cup V(H_2).
$$
Thus $|Q|=2m-1$ and $|W|=2d+2$. Finally, let
$$
A:=X\cap Q
\qquad\text{and}\qquad
D:=Z\cap Q.
$$

Every edge of $B$ with both endpoints in $Q$ is retained in $G$. Since there
is no edge between $X$ and $Z$, it follows that
\begin{equation*}
N_B(A)\subseteq (T\cap Q)\cup\{u\}
\quad\text{and}\quad
N_B(D)\subseteq (T\cap Q)\cup\{u\}.
\end{equation*}
It follows from Lemma~\ref{expansion} that, for
$Y\in\{A,D\}$,
\begin{equation}\label{boundary}
0<|Y|\le m
\quad\Longrightarrow\quad
b+1\ge \max\left\{\frac{|Y|}{4},\frac d8\right\}.
\end{equation}

We shall also repeatedly use the half-order assumption that $x=|X|\le |V(G)|/2$. Since
$|V(G)|=2m+2d+1$, it gives
\begin{equation}\label{x}
x\le m+d.
\end{equation}

\begin{claim}\label{small}
For the set $X$ fixed above, we have $b\ge d/9$.
\end{claim}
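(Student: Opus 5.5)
The plan is a case analysis on how $X$ meets the two parts $Q$ and $W$. Along the way I will use the implication~\eqref{boundary} and the bound~\eqref{x}, and I will convert $b+1\ge d/8$ into $b\ge d/9$ via $d/8-1\ge d/9$, which holds because $d\ge72$.

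\emph{Case 1: $A\neq\emptyset$ and $|A|\le m$.} Then~\eqref{boundary} with $Y=A$ gives $b+1\ge d/8$, hence $b\ge d/9$.

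\emph{Case 2: $|A|>m$.} Then $|Q\setminus A|\le m-2$.
\begin{itemize}
\item If $D\neq\emptyset$, then $D\subseteq Q\setminus A$ gives $0<|D|\le m$. So~\eqref{boundary} with $Y=D$ again yields $b+1\ge d/8$.
\item If $D=\emptyset$, then every vertex of $Q\setminus X$ lies in $T$. By~\eqref{x},
$$
b\ge |Q|-|A|\ge (2m-1)-(m+d)=m-d-1.
$$
Since $r\ge1$ we have $m=d2^r\ge 2d$, so $b\ge d-1\ge d/9$.
\end{itemize}

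\emph{Case 3: $A=\emptyset$, so $X\subseteq W$.} Pick $i$ with $S:=X\cap V(H_i)\neq\emptyset$. Since $H_1$ and $H_2$ are disjoint blocks, it suffices to find $d/9$ neighbours of $S$ outside $X$. I will split on the size of $S$.
\begin{itemize}
\item If $|S|\le d/2$, take $v\in S$. Its $d$ neighbours include at most $|S|-1$ vertices of $S$. The remaining ones lie outside $X$: those in $H_i$ are not in $X$ by definition of $S$, and those in $R_i$ are not in $X$ since $A=\emptyset$. This gives $b\ge d/2$.
\item If $|S|>d/2$, I use two facts. First, every vertex of $V(H_i)\setminus S$ has a neighbour in $S$, because in $K_{d+1}-M_i$ it misses at most one vertex of $S$, and $|S|\ge2$. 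Second, each port in $S$ contributes its private $\varphi_i$-neighbour in $R_i$, and these lie in $T$ because $A=\emptyset$. At least $|S|-(d+1-d/2)$ ports lie in $S$, so
$$
b\ge (d+1-|S|)+\max\{0,\,|S|-d/2-1\}\ge d/2.
$$
\end{itemize}

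The only step needing care is Case~2 with $D=\emptyset$. There~\eqref{boundary} cannot be applied, and one must instead use the half-order bound~\eqref{x} together with $m\ge 2d$ (this is where $r\ge1$ is needed). The other cases follow directly from Lemma~\ref{expansion} or from the explicit structure of $H_i$.
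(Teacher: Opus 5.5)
Your proof is correct and follows the paper's own case analysis: you apply the Ramanujan neighbourhood bound to $A$ (or to $D$ when $|A|>m$), you use the half-order bound with $m\ge 2d$ when $D=\emptyset$, and you use the structure of $H_i$ when $A=\emptyset$. The only deviation is that you split the case $A=\emptyset$ into $|S|\le d/2$ versus $|S|>d/2$ instead of the paper's $|X_i|=1$ versus $|X_i|\ge 2$, and this changes nothing of substance.
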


\begin{proof}
Suppose first that $A\ne\emptyset$. If $|A|\le m$, then
\eqref{boundary} gives $b\ge d/8-1\ge d/9$, where the last
inequality uses $d\ge72$.

Assume now that $|A|>m$. Since $A$ and $D$ are disjoint subsets of the
$(2m-1)$-vertex set $Q$, we have $|D|<m$. If $D\ne\emptyset$, applying
\eqref{boundary} to $D$ again gives $b\ge d/9$. If
$D=\emptyset$, then $Q\setminus A\subseteq T$. Since $|A|\le x$,
\eqref{x} gives
$$
b\ge |Q|-|A|\ge 2m-1-(m+d)=m-d-1.
$$
The assumption $r\ge1$ implies $m=d\,2^r\ge2d$, and hence
$b\ge d-1\ge d/9$.

It remains to consider $A=\emptyset$. Then $X\subseteq W$. Choose
$i\in\{1,2\}$ such that
$X_i:=X\cap V(H_i)$ is nonempty. If $|X_i|=1$, its unique vertex has at
least $d-1$ neighbors in $H_i\setminus X_i$, so $b\ge d-1$.

Suppose that $|X_i|\ge2$. Every vertex of $H_i$ has at most one nonneighbor
within $H_i$. Thus every vertex of $V(H_i)\setminus X_i$ has a neighbor in
$X_i$ and belongs to $T$. Moreover, the vertices in $X_i\cap J_i$ have
distinct neighbors in $R_i$. Since $A=\emptyset$, all these neighbors lie
outside $X$. The two groups of neighbors lie in disjoint vertex sets, so
$$
b\ge |V(H_i)\setminus X_i|+|X_i\cap J_i|
   =d+1-|X_i\setminus J_i|\ge d/2.
$$
Here the last inequality follows from
$|V(H_i)\setminus J_i|=d/2+1$. This proves the claim.
\end{proof}

\begin{claim}\label{large}
If $x\ge8d$, then $b\ge x/10$.
\end{claim}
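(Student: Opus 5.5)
The plan is to split according to whether the base part $A=X\cap Q$ is at most or more than half of $B$, and to apply \eqref{boundary} to $A$ in the first case and to $D$ in the second. First I record two consequences of $x\ge 8d$. Since $|X\cap W|\le |W|=2d+2$, we have $|A|\ge x-2d-2\ge 6d-2>0$, so $A\neq\emptyset$. Combining $x\ge 8d$ with \eqref{x} gives $m+d\ge 8d$, so $m\ge 7d$. Since $m=d\,2^r$ is $d$ times a power of $2$, this forces $m\ge 8d$. This lower bound on $m$ is what makes the second case work, so I will derive it explicitly at the start.

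\emph{Case $|A|\le m$.} Here \eqref{boundary} applied to $Y=A$ gives
$$b\ge \frac{|A|}{4}-1\ge \frac{x-2d-2}{4}-1=\frac{x-2d-6}{4}.$$
This is at least $x/10$ exactly when $\tfrac{3}{20}x\ge \tfrac d2+\tfrac32$. With $x\ge 8d$ the left side is at least $\tfrac{6d}{5}$, so the inequality holds for $d\ge 72$, with plenty of room.

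\emph{Case $|A|>m$.} Now $D$ and $A$ are disjoint subsets of $Q$, so $|D|<m$. The key identity is that $Q$ is the disjoint union of $A$, $T\cap Q$ and $D$. Hence
$$|D|\ge |Q|-|A|-b\ge 2m-1-x-b\ge m-d-1-b,$$
where the last step uses $|A|\le x\le m+d$.
\begin{itemize}
\item If $D\neq\emptyset$, then \eqref{boundary} gives $4(b+1)\ge |D|\ge m-d-1-b$, so $b\ge (m-d-5)/5$. It remains to check $(m-d-5)/5\ge (m+d)/10\ge x/10$. This is equivalent to $m\ge 3d+10$, which follows from $m\ge 8d$.
\item If $D=\emptyset$, then $Q\setminus A\subseteq T$. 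So $b\ge 2m-1-x\ge m-d-1$, and this is at least $(m+d)/10\ge x/10$ by the same bound on $m$.
\end{itemize}

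The main obstacle is the second case. The naive bound from \eqref{boundary} on $D$ involves $m-x$, which compares poorly to $x/10$ when $m$ is small relative to $d$. The resolution is to observe that the hypothesis $x\ge 8d$ together with \eqref{x} already rules out small $r$, giving $m\ge 8d$. After that, all the estimates are routine linear inequalities.
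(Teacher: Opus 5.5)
Your proof is correct and follows the paper's argument essentially step for step: nonemptiness of $A$, the split $|A|\le m$ versus $|A|>m$, and the partition $Q=A\cup(T\cap Q)\cup D$ feeding \eqref{boundary} for $D$ (or giving $Q\setminus A\subseteq T$ when $D=\emptyset$). The only difference is the final arithmetic. The paper applies \eqref{x} in the form $2m\ge 2x-2d$ to get $2m-1-x\ge x-2d-1$ directly in terms of $x$, so your detour through $m\ge 7d$, and the power-of-two upgrade to $m\ge 8d$, is harmless but unnecessary; what you call the main obstacle is not really one.
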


\begin{proof}
Since $|W|=2d+2$ and $x\ge8d$, the set $A=X\cap Q$ is nonempty.

Suppose first that $|A|\le m$. By \eqref{boundary},
$$
b\ge\frac{|A|}{4}-1
\ge\frac{x-2d-2}{4}-1
\ge\frac{x}{10},
$$
where the last inequality follows from $x\ge8d$ and $d\ge72$.

We may therefore assume that $|A|>m$, which implies $|D|<m$. If
$D=\emptyset$, then $Q\setminus A\subseteq T$. Using $|A|\le x$ and
\eqref{x}, we obtain
$$
b\ge 2m-1-x\ge x-2d-1\ge x/10.
$$

Finally, suppose that $D\ne\emptyset$. By
\eqref{boundary}, $b+1\ge |D|/4$. The sets $A$, $D$ and
$T\cap Q$ partition $Q$, and hence
$$
|D|=2m-1-|A|-|T\cap Q|\ge2m-1-x-b\ge x-2d-1-b,
$$
where the last inequality again follows from \eqref{x}. Therefore
$$
4b+4\ge x-2d-1-b,
$$
and hence
$$
b\ge\frac{x-2d-5}{5}\ge\frac{x}{10}.
$$
The final inequality uses $x\ge8d$ and $d\ge72$. This completes the proof.
\end{proof}

If $x<8d$, Claim~\ref{small} gives
$b\ge d/9>x/72$. If $x\ge8d$, Claim~\ref{large} gives
$b\ge x/10\ge x/72$. Thus every nonempty set $X\subseteq V(G)$ with
$|X|\le |V(G)|/2$ satisfies
$
|N_G(X)|\ge\frac{|X|}{72}.
$
Together with Lemma~\ref{nonham}, this proves
Theorem~\ref{main}.

\subsection{Proof of Corollary~\ref{answer}}

Let $G=G(d,r)$ be a graph from Theorem~\ref{main}, and let
$n=|V(G)|$. If $d/2\le |X|\le n/2$, then
$15|X|/d\ge15/2$. Since the function $\log^2(15x/d)$ is increasing on the interval of $d/2\le x\le n/2$, every $0<\varepsilon\le\varepsilon_0$ satisfies
$$
\rho(|X|;\varepsilon,d)\le\frac{\varepsilon}{\log^2(15/2)}\le\frac1{72}.
$$
Theorem~\ref{main} therefore implies that $G$ is an
$(\varepsilon,d)$-expander.

For the required family, let $d$ tend to infinity through multiples of $4$ and
choose $r=\lfloor d^{1/4}\rfloor$. Since
$n=2^{r+1}d+2d+1=2^{r+1}d(1+o(1))$, we have
$$
\log n=r\log 2+\log(2d)+o(1)=(\log 2+o(1))d^{1/4}.
$$
It follows that
$d/\log^2 n=((\log 2)^{-2}+o(1))d^{1/2}\to\infty$. Moreover,
$d/n\to0$ because $r\to\infty$. Thus $d=o(n)$, and these graphs answer
Problem~\ref{prob} negatively. This proves Corollary~\ref{answer}.
\section*{\normalsize Acknowledgement}  

The authors thank Professor Xingzhi Zhan for his constant support and guidance. This research was supported by the NSFC Grant No. 12271170
and by Science and Technology Commission of Shanghai Municipality Grant No. 22DZ2229014.

\section*{\normalsize Declaration on the use of AI}

ChatGPT was used to assist in developing and checking the
constructions and proofs. The authors independently verified all mathematical
arguments, wrote the paper, and take full responsibility for its content.

{\small
\renewcommand{\baselinestretch}{1.0}\selectfont

}

\end{document}